\title{Finitely Unstable Theories and Computational Complexity}
\author{Tuomo Kauranne\thanks{Department of Mathematics and Physics, Lappeenranta University of Technology,
PL 20, 53851 Lappeenranta, Finland ({\tt tuomo.kauranne@lut.fi}).}}
\begin{document}

\maketitle







\begin{abstract} The complexity class $NP$ can be
logically characterized both through existential second order logic $SO\exists$,
as proven by Fagin, and through simulating a Turing machine via the
satisfiability problem of propositional logic SAT, as proven by
Cook. Both theorems involve encoding a Turing machine by a formula in the
corresponding logic and stating that a model of this formula exists if and only if
the Turing machine halts, i.e. the formula is satisfiable iff the 
Turing machine accepts its input. Trakhtenbrot's theorem does the same in first order
logic $FO$. Such different orders of encoding are possible because the
set of all possible configurations of any Turing machine up to any given 
finite time instant can be defined by a finite set of propositional variables,
or is locally represented by a model of fixed finite size. 
In the current paper, we first encode such time-limited computations of a deterministic Turing machine (DTM)
in first order logic. We then take a closer look at DTMs that solve SAT. When the length of the input
string to such a DTM that contains effectively encoded instances of SAT is parameterized by the natural number $M$,
we proceed to show that the corresponding $FO$ theory $SAT_M$ 
has a lower bound on the size of its models that grows almost exponentially with $M$. This lower
bound on model size also translates into a lower bound on the deterministic
time complexity of SAT. 
 
\end{abstract}

\section{Introduction}

Logical definition and computational complexity are intimately
intertwined, as a wide literature witnesses since the seminal theorem by
Fagin \cite{Fagin}. \textit{Fagin's Theorem} states that
\begin{equation}
SO \exists  = NP
\end{equation}
that is, the class of functions computable in polynomial time by
a non-deterministic Turing machine is identical to the class of
sets definable by formulas in second order existential logic. Many
other results in the same spirit of descriptive complexity or finite
model theory have
since been proven, as summarized in, for example, \cite{Baldwin}, \cite{Borger},
\cite{Immerman}, \cite{Flum}, \cite{Libkin}.

At the same time we have \textit{Cook's Theorem} \cite{Cook}
that states that the satisfiability problem of mere "zeroth
order" propositional logic is complete for NP. Cook's Theorem
states that for any halting computation of a given Turing machine,
deterministic or non-deterministic, there exists a propositional
formula - indeed even one in conjunctive normal form - that is
satisfiable if and only if the corresponding truth assignment to
its free variables defines a halting computation of the given
Turing machine. Moreover, this propositional formula is of length
less than $\mathcal{O} (t^3)$ in the number of time steps $t$ taken by
this computation \cite{Salomaa}.

Let us present
propositional formulas in Conjunctive Normal Form, using the encoding
introduced by Immerman in \cite{Immerman}, p. 114. In this encoding, an instance of the
propositional satisfiability problem SAT is defined by two relations over a pair of variables, namely

\begin{tabular}{lll}
 $\bar{P}(w,z)$ & iff & the clause $w$ contains the positive literal $z$ \\
 $\bar{N}(w,z)$ & iff & the clause $w$ contains the negative literal $\lnot z$
\end{tabular}

Let us assume propositional target formulas always to be given
in this form. For the purpose of analysis, however, 
let us extend this encoding by furnishing the binary relations above with a third
variable $\phi$ that refers to a finite propositional formula presented in the CNF form above. 
The domain of each of the three variables is at most countable in size.
Let us now replace the above binary relations with the two
ternary relations $P(w,z,\phi)$ and $N(w,z,\phi)$ and, following \cite{Immerman},
amend them also with the unary relation $\bar{E}(z)$,
with the following intended interpretation

\begin{tabular}{lll}
 $P(w,z,\phi)$ & iff & the clause $w$ contains the positive literal $z$ in the CNF encoding\\
&& of the propositional formula $\phi$\\
 $N(w,z,\phi)$ & iff & the clause $w$ contains the negative literal $\lnot z$ in the CNF encoding\\
&& of the propositional formula $\phi$\\
 $\bar{E}(z)$ & iff & the propositional variable $z$ has been assigned the value 1, i.e. \bf{true}\\
\end{tabular}

The corresponding propositional formula $\phi$ is satisfiable iff the 
following second order existential formula is true:
\begin{equation}
\label{SATSOE}
\exists \bar{E} \forall w \exists z (P(w,z,\phi) \land \bar{E}(z)) \lor (N(w,z,\phi) \land \lnot \bar{E}(z))
\end{equation}

\section{Satisfiability of bounded fragments of propositional logic}

Let us study the satisfiability of propositional formulas in CNF, with the encoding introduced
in the previous section. Let us denote an effective binary representation
of such an encoding of a propositional formula $\phi$ by $\theta(\phi)$.
We shall denote the satisfiability problem of propositional logic
by SAT. With the binary encoding at our disposal,
we can extend the domain of \eqref{SATSOE} to the set of all finite binary strings $\{y\}$ by
\begin{equation}
\label{SAT}
\begin{split}
\tilde{\Psi}_{\mathrm{SAT}} \leftrightarrow \\
\exists y \exists \phi \exists \bar{E} \forall w \exists z (y=\theta(\phi) \land \\
(P(w,z,\phi) \land \bar{E}(z)) \lor (N(w,z,\phi) \land \lnot \bar{E}(z))
\end{split}
\end{equation}

As an example of such an efficient binary encoding scheme $\theta(\cdot)$, let us consider the following
scheme.
\begin{enumerate}
\item The binary representation of a natural number indicating the index of a propositional variable is
encoded in every fourth bit of our input string $y$ only, starting from the fourth bit.
\item The preceding and intervening three-bit sequences are used as codes on interpreting the last bit, according to the following table
\end{enumerate}
\vspace{0.5cm}
\begin{tabular}{|l|l|}
\hline
Code & Meaning \\
\hline
000 & Next bit continues the binary representation of the index of the current \\
& propositional variable\\
001 & Next bit begins the binary representation of the index of a new propositional\\
&  variable in the current clause \\
010 & Next bit represents a propositional constant: 0 for {\tt false}, 1 for {\tt true} \\
011 & Next bit is the first bit in the binary representation of the index of the first variable\\
&  in a new clause\\
100 & Next bit is the first bit in the binary representation of the first variable in the first\\
&  clause in the list $N$ \\
101 & End of input string \\
\hline
\end{tabular}
\vspace{0.5cm}

Let us parameterize propositional formulas $\phi$ by the length $M$ of their encoding
in this effective binary encoding scheme $\theta(\cdot)$. Let us denote the corresponding fragment
of well-formed propositional formulas of encoding length at most $M$ by $\mathrm{L}_M$ and the corresponding
satisfiability problem by $\mathrm{SAT}_M$. The domain of definition of $\mathrm{SAT}_M$ is likewise extended to the set of all finite binary strings $\{ y \}$ of length at most $M$ by the formula $\tilde{\Psi}_{\mathrm{SAT}_M}$, with $|y|$
denoting the length of the binary string $y$.
\begin{equation}
\label{SAT_M}
\begin{split}
\tilde{\Psi}_{\mathrm{SAT}_M} \leftrightarrow \\
\exists y \exists \phi \exists \bar{E} \forall w \exists z (|y| \leq M \land y=\theta(\phi) \land \\ (P(w,z,\phi) \land \bar{E}(z)) \lor (N(w,z,\phi) \land \lnot \bar{E}(z))
\end{split}
\end{equation}
The condition $|y| \leq M$ is equivalent to the requirement $y \in 2^M$.
We can restrict the domain of the first order variables in \eqref{SAT_M} to be of size $2^{2^{2M}}$.

We have now two formulas in existential second order logic that can be used to capture SAT.
A crucial difference between  \eqref{SAT} and \eqref{SAT_M} is the fact that, unlike
$\tilde{\Psi}_{\mathrm{SAT}}$, $\tilde{\Psi}_{\mathrm{SAT}_M}$ can be recast as a first order formula for any fixed $M$. The
existentially quantified relation $\bar{E}(z)$ in this case ranges over a finite set of size at most $2^{M}$ only.
Hence the $SO\exists$ quantification $\exists \bar{E}$ 
can be replaced by a $FO$ quantification $\exists e$, after choosing a separate variable 
$e$ with a domain that comprises a finite set of truth assignments. 

We replace the unary relation $\bar{E}(z)$ with a binary relation $E(e,z)$ but retain the ternary relations $P(w,z,\phi)$  and $N(w,z,\phi)$ that have the following intended interpretations

\begin{tabular}{lll}
 $E(e,z)$ & iff the propositional variable $z$ has the value 'true' in the\\
& truth assignment $e$ \\
 $P(w,z,\phi)$ &iff the clause $w$ with a positive literal $z$ belongs to the set of positive\\
 &clause-variable pairs of the propositional formula $\phi$\\
 $N(w,z,\phi)$ &iff the clause $w$ with a negative literal $\lnot z$ belongs to the set of negative\\
 & clause-variable pairs of the propositional formula $\phi$\\
\end{tabular}

If we set a fixed finite bound on the size of our models, as in
\eqref{SAT_M}, the set of models of such a restriction of \eqref{SAT} becomes finite and the
corresponding theory primitive recursive by exhaustive search:  
\begin{equation}
\label{SATFO_M}
\begin{split}
\Psi_{\mathrm{SAT}_M} \leftrightarrow \\
\exists \phi \exists y \exists e \forall w \exists z (|y| \leq M \land y=\theta(\phi) \land \\
(P(w,z,\phi) \land E(e,z)) \lor (N(w,z,\phi) \land \lnot E(e,z)))
\end{split}
\end{equation}
We can now associate the $FO$-theory defined by $\Psi_{\mathrm{SAT}_M}$ with the bounded satisfiability problem $\mathrm{SAT}_M$, for each $M > 0$. 
The variable $e$ ranges over assignments of truth values to at most $2^M$ propositional
variables. The size of the domains of $z$, $w$ and $y$ is at most $2^M$ and
the size of the domain of $\phi$ at  most $2^{2^{2M}}$ and that of $e$ at most $2^{2^M}$. 

We shall use \textit{italics} in $L_M$ and $SAT_M$ to indicate the
first order language and theory of this finite fragment of the propositional satisfiability problem, respectively. 
We take $SAT_M$ to be closed under implication but not necessarily complete. We also allow $SAT_M$ to use finitely many first order variables up to some limit that can grow without bound with $M$. The union of all $SAT_M$'s is therefore not a first order theory, but all individual $SAT_M$'s are.

\section{Encoding deterministic Turing machines in first order logic}
Modifying the notation introduced by B\"{o}rger in \cite{Borger} to conform to a first-order, rather than
propositional, definition of an arbitrary Turing machine $T$, we first
define a program formula 
that any  $\mathrm{SAT}$ solving Turing machine $T$ is required to satisfy. We shall also define corresponding
input and accepting halting state formulas $y=\theta(\phi)$ and
$\omega$, respectively, and amend the program formula so that it corresponds to
a deterministic Turing machine, when appropriate.

The input formula 
\begin{equation}
\label{input}
y=\theta(\phi)
\end{equation} 
states that at the beginning
of the computation, the first positions to the right from the starting position of the input tape of the
Turing machine $T$ contain the binary encoding of the propositional formula
$\phi$ in the CNF encoding introduced in the Introduction. The input tape is read-only.
The input formula $y=\theta(\phi)$ implies that our Turing machine checks the syntax
of its input. Therefore it must also read all of its input.

The program formula  defines  time-limited
computations of an arbitrary Turing machine $T$, with a bound $b$ on the number of time steps taken. It has the form
\begin{equation}
\label{program}
 \pi_{T}(t,t',u,u',v,v',\tau,\bar{t},\bar{u},\bar{v},o_{\bar{t}},o_{\bar{u}},o_{\bar{v}})
\end{equation}
with variables to be described below. We shall explicitly assume
the time parameter in our formalization. Bound variables $t, t'$ stand for time steps, $u, u'$ for tape cells on the work tape 
and $v, v'$ for tape cells on the input tape, $\tau$ denotes the current time step,
 $\bar{t}$ the $b+1$-tuple of time steps $(t_0, \ldots , t_b)$,  $\bar{u}$ the $2b+1$-tuple of
working tape cells $(u_{-b}, \ldots , u_b)$, and  $\bar{v}$ the $2b+1$-tuple of input tape cells $(v_{-b}, \ldots , v_b)$.
The variables $o_{\bar{t}},o_{\bar{u}}$ and $o_{\bar{v}}$ denote permutations of the elements
that have been ordered in the vectors $\bar{t},\bar{u}$ and $\bar{v}$, respectively.

For arbitrarily large finite models, linear order is not $FO$-definable. 
However, a total order in a finite set of bounded size can be defined in $FO$ with a 
relation $\bar{O}(u,u')$ that satisfies the definition
\begin{equation}
\label{Order}
\forall u,u' (\lnot(\bar{O}(u,u') \land \bar{O}(u',u)) \land (\bar{O}(u,u') \lor \bar{O}(u',u) \lor u=u'))
\end{equation} 
To express the existence of such an order in a bounded set, we employ an analogous devise
to the one employed in \eqref{SATFO_M}. Noting that the set of permutations of a bounded set is itself a bounded set,
let us define a relation $Ord^{b+1}(o,u_0,\ldots,u_b)$ on a set of size $b+1$ by the sentence
\begin{equation}
\label{ord}
Ord^{b+1}(o,u_0,\ldots,u_b) \leftrightarrow \exists o \exists u_0, \ldots, u_b (O(o,u_0,u_1) \land \ldots \land 
O(o,u_{b-1},u_b)
\end{equation}
with altogether $\frac{b (b+1)}{2}$ appearances of a relation term of the form $O(o,u,u')$. 
Here the ternary relation $O(o,u,u')$ has the intended meaning that the corresponding
binary relation $\bar{O}(u,u')$ satisfies the definition \eqref{Order} for the permutation
$o$ of the $b+1$ elements in the valuation of the relation $Ord^{b+1}(o,u_0,\ldots,u_b)$. 
Hence $o$ ranges over a set of size $(b+1)!$. Let us further introduce the notation
$-b \leq u, u' \leq b$, by which we mean that
\begin{equation}
\begin{split}
-b \leq u, u' \leq b \leftrightarrow \\
(u=u_{-b} \lor \ldots \lor u=u_b) \land \\
(u'=u_{-b} \lor \ldots \lor u'=u_b) 
\end{split}
\end{equation}
and $0 \leq t, t' \leq b$, by which we mean that
\begin{equation}
\begin{split}
0 \leq t, t' \leq b \leftrightarrow \\
(t=t_{0} \lor \ldots \lor t=t_b) \land \\
(t'=t_{0} \lor \ldots \lor t'=t_b) 
\end{split}
\end{equation}

Let us define the successor relation $S(u,u',\bar{u},o_{\bar{u}})$ on the permutation $o_{\bar{u}}$ of any set of 
size $2b+1$ by
\begin{equation}
\label{successor}
\begin{split}
S(u,u',\bar{u},o_{\bar{u}}) \leftrightarrow \\
\forall (-b \leq v \leq b) (O(o_{\bar{u}},u,u') \land O(o_{\bar{u}},u,v)
\rightarrow O(o_{\bar{u}},u',v) \lor (v=u'))
\end{split}
\end{equation} 

Let us denote the set of permutations of the bounded set of time steps $\{t\}$ by $\{o_{\bar{t}}\}$, the
set of permutations of the set $\{u\}$ of working tape cells by $\{o_{\bar{u}}\}$ and the set of permutations of the 
set $\{v\}$ of input tape cells by $\{o_{\bar{v}}\}$. The sizes of the corresponding sets of permutations are 
$(b+1)!$ for the set $\{o_{\bar{t}}\}$  and $(2b+1)!$ for the sets $\{o_{\bar{u}}\}$ and $\{o_{\bar{v}}\}$, 
respectively, because input and work tapes can be traversed in either direction.

A bounded and finite set can always be totally ordered and a unique successor exists for all of its elements but the last one.
But we have not specified any ordering beforehand, since its existence is explicitly established in
the defining formula \eqref{ord}. The results below therefore apply to any bounded and finite set, and not just to
ordered structures. 

Returning to the program formula \eqref{program}, for each 
$t,t',u,u',v,v',\tau,\bar{t},\bar{u},\bar{v},o_{\bar{t}},o_{\bar{u}},o_{\bar{v}}$, the formula 
$\pi_{T} (t,t',u,u',v,v',\tau,\bar{t},\bar{u},\bar{v},o_{\bar{t}},o_{\bar{u}},o_{\bar{v}})$ 
is a first order formula over the relational vocabulary
$A(t,u)$, $B_l (t,u)$, $B_{in} (t_0,v)$, $Z_q (t)$, $A_{in} (t,v)$, $S(t,t',\bar{t},o_{\bar{t}})$, 
$S(u,u',\bar{u},o_{\bar{u}})$ and $S(v,v',\bar{v},o_{\bar{v}})$. 
Parameter $l$ stands for a letter in the alphabet and $q$ for a
$T$-state. We shall take the following intended interpretations for
the relations in our vocabulary:

\begin{tabular}{lll}
 $A(t,u)$ & iff & working cell on work tape at time $t$ is $u$ \\
 $A_{in}(t,v)$ & iff & reading cell on input tape at time $t$ is $v$ \\
 $B_l (t,u)$  & iff & letter $a_l$ is in cell $u$ of the work tape at time $t \geq 0$ \\
 $B_{in} (t_0,v)$  & iff & bit 1 is in cell $v$ of the input tape at time $t_0$, and thereafter \\
 $Z_q(t)$ & iff & $q$ is the state of $T$ at time $t$, with $Z_{\omega}(t)$ indicating that $T$ has halted\\
 $S(t,t',\bar{t},o_{\bar{t}})$  & iff & $t'$ is the direct successor instant of $t$ in the tuple $\bar{t}$,\\
 && totally ordered in the permutation $o_{\bar{t}}$ \\
 $S(u,u',\bar{u},o_{\bar{u}})$  & iff & $u'$ is the direct successor cell of $u$ on the work tape in the tuple $\bar{u}$,\\
 && totally ordered in the permutation $o_{\bar{u}}$ \\
 $S(v,v',\bar{v},o_{\bar{v}})$  & iff & $v'$ is the direct successor cell of $v$ on the input tape in the tuple $\bar{v}$,\\
 && totally ordered in the permutation $o_{\bar{v}}$ \\
\end{tabular}

A \textit{correct Turing machine configuration} is a description of
an instantaneous state of a Turing machine $T$ at any given time step $\tau$. 

For any $T$-configuration, let 
$\pi_{T}(t,t',u,u',v,v',\tau,\bar{t},\bar{u},\bar{v},o_{\bar{t}},o_{\bar{u}},o_{\bar{v}})$ at time $\tau$
feature the conjunction of the above literals describing that
configuration. The domain of this program formula is the Cartesian product set
\begin{equation}
N = \{t\}^3 \times \{u\}^2 \times \{v\}^2 \times \{ \bar{t}\} \times \{ \bar{u}\}\times \{ \bar{v}\} 
\times \{o_{\bar{t}}\} \times \{o_{\bar{u}}\} \times \{o_{\bar{v}}\}
\end{equation} 
At time $\tau = t_b$, the size $|N|$ of this domain is bounded by $(b+1)^3 \cdot (2 b + 1)^4 \cdot
(b+1)^{b+1} \cdot (2b+1)^{2b+1} \cdot (b+1)! \cdot ((2b+1)!)^2$.

The program formula $\pi_{T}(t,t',u,u',v,v',\tau,\bar{t},\bar{u},\bar{v},o_{\bar{t}},o_{\bar{u}},o_{\bar{v}})$ 
defines the logical rules that correspond to the program steps in the program of $T$,
so that its models simulate $T$-computations in the
sense of the following \textit{Simulation Lemma}, modified from \cite{Borger}, p.
480:
\begin{lemma}
\label{sim}
Let $(\mathring{t},\mathring{t'},\mathring{u},\mathring{u'},\mathring{v},\mathring{v'},\mathring{\tau},
\mathring{\bar{t}},\mathring{\bar{u}},\mathring{\bar{v}},
\mathring{o}_{\bar{t}},\mathring{o}_{\bar{u}},\mathring{o}_{\bar{v}})$ 
be a valuation, i.e. an assignment of an element of $N$ to the tuple 
$(t,t',u,u',v,v',\tau,\bar{t},\bar{u},\bar{v},o_{\bar{t}},o_{\bar{u}},o_{\bar{v}})$. 

Models of $\pi_{T}(t,t',u,u',v,v',\tau,\bar{t},\bar{u},\bar{v},o_{\bar{t}},o_{\bar{u}},o_{\bar{v}})$ 
for any fixed time $\tau$ are sets of such valuations for which the program formula is valid. 
For arbitrary $T$-configurations at time $\tau=t_0$, with the corresponding program formula
$\pi_{T}(t,t',u,u',v,v',t_0,\bar{t},\bar{u},\bar{v},o_{\bar{t}},o_{\bar{u}},o_{\bar{v}})$ and for an arbitrary future time $t_k$, where $t_0 < t_k \leq t_b$, if 
$(\mathring{t},\mathring{t'},\mathring{u},\mathring{u'},\mathring{v},\mathring{v'},\mathring{\tau},\mathring{\bar{t}},
\mathring{\bar{u}},\mathring{\bar{v}},
\mathring{o}_{\bar{t}},\mathring{o}_{\bar{u}},\mathring{o}_{\bar{v}})$ satisfies 
$\pi_{T}(t,t',u,u',v,v',t_0,\bar{t},\bar{u},\bar{v},o_{\bar{t}},o_{\bar{u}},o_{\bar{v}})$ at time $t_0$, 
then for at least one $T$-configuration 
$\pi_{T}(t,t',u,u',v,v',t_k,\bar{t},\bar{u},\bar{v},o_{\bar{t}},o_{\bar{u}},o_{\bar{v}})$ at time $t_k$
which satisfies 
\begin{equation}
\pi_{T}(t,t',u,u',v,v',t_0,\bar{t},\bar{u},\bar{v},o_{\bar{t}},o_{\bar{u}},o_{\bar{v}}) \vdash ^k_T 
\pi_{T}(t,t',u,u',v,v',t_k,\bar{t},\bar{u},\bar{v},o_{\bar{t}},o_{\bar{u}},o_{\bar{v}})
\end{equation}
 $(\mathring{t},\mathring{t'},\mathring{u},\mathring{u'},\mathring{v},\mathring{v'},\mathring{\tau},
\mathring{\bar{t}},\mathring{\bar{u}},\mathring{\bar{v}},
\mathring{o}_{\bar{t}},\mathring{o}_{\bar{u}},\mathring{o}_{\bar{v}})$ 
also satisfies $\pi_{T}(t,t',u,u',v,v',t_k,\bar{t},\bar{u},\bar{v},o_{\bar{t}},o_{\bar{u}},o_{\bar{v}})$ at time $t_k$.
\end{lemma}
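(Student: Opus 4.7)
The plan is to prove the Simulation Lemma by induction on $k$, the number of computation steps. The key observation is that $\pi_T$ is built as a conjunction of clauses, one per transition rule of $T$, which relate the values of the predicates $A, A_{in}, B_l, B_{in}, Z_q$ at a time $t$ to their values at the successor time $t'$ (in the sense of $S(t,t',\bar{t},o_{\bar{t}})$). So once we have a valuation that makes $\pi_T$ valid at $t_0$, we essentially already have encoded, within the same valuation, the entire successor chain $t_0, t_1, \ldots, t_b$ together with the instantaneous descriptions at each of these times.

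For the base case $k=1$, I would unfold the definition of $\pi_T$ at time $t_0$ and isolate the clauses that describe a single transition: given a state $Z_q(t_0)$, working cell $A(t_0,u)$ with content $B_l(t_0,u)$, and reading cell $A_{in}(t_0,v)$ with content $B_{in}(t_0,v)$, the program clauses force the values $Z_{q'}(t_1), A(t_1,u''), B_{l'}(t_1,u)$ dictated by the corresponding instruction of $T$, and assert that all other work-tape cells keep their contents (the frame axioms). Because $\vdash_T$ uses exactly the same transition relation, the configuration obtained from the initial one by one step of $T$ agrees, literal by literal, with what the valuation records at $t_1$. Hence the valuation satisfies $\pi_T$ with $\tau$ instantiated to $t_1$ as well, since the clauses read at $t_1$ relate $t_1$ to $t_2$ using the already-fixed contents at $t_1$ and the same transition table.

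The inductive step is then routine: assume the claim for $k-1$, so that the valuation satisfies $\pi_T$ at time $t_{k-1}$ and the Turing-machine computation $c_0 \vdash^{k-1}_T c_{k-1}$ corresponds to the literals recorded there; a single application of the base case, with $t_{k-1}$ playing the role of $t_0$ and $t_k$ the role of $t_1$, yields satisfaction at $t_k$. Composing this with $c_{k-1} \vdash_T c_k$ produces the full $\vdash^k_T$ relation, as required.

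The main obstacle I anticipate is not the induction itself but the bookkeeping needed to verify the base case in detail: one has to check every transition clause in $\pi_T$, in particular the frame conjuncts, to make sure that the valuation's recorded contents at $t_1$ are uniquely and consistently determined from those at $t_0$ via the successor relations $S(u,u',\bar{u},o_{\bar{u}})$ and $S(v,v',\bar{v},o_{\bar{v}})$, and that the permutations $o_{\bar{t}}, o_{\bar{u}}, o_{\bar{v}}$ fixed by the valuation really do pick out the intended direct successors in the ordered tuples. Once this mapping between transition rules of $T$ and clauses of $\pi_T$ is pinned down, the induction goes through uniformly up to $t_b$, i.e.\ for all $k \leq b$.
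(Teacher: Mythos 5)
Your induction on the number of computation steps $k$, with the base case unfolding the transition clauses of $\pi_T$ and the inductive step chaining single-step simulations, is exactly the standard argument and matches the paper's approach: the paper itself gives no detailed proof, stating only that the lemma is ``modified from'' B\"{o}rger (p.~480) and that ``this result is achieved by representing every state transition of $T$ as a step of logical inference,'' which is precisely the correspondence your base case makes explicit. Your proposal is therefore correct and in fact supplies more detail (the induction structure and the frame/successor bookkeeping caveats) than the paper does.
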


The notation above,  
\begin{equation}
\pi_{T}(t,t',u,u',v,v',t_0,\bar{t},\bar{u},\bar{v},o_{\bar{t}},o_{\bar{u}},o_{\bar{v}}) \vdash ^k_T 
\pi_{T}(t,t',u,u',v,v',t_k,\bar{t},\bar{u},\bar{v},o_{\bar{t}},o_{\bar{u}},o_{\bar{v}})
\end{equation}
means that the Turing
machine configuration $\pi_{T}(t,t',u,u',v,v',t_k,\bar{t},\bar{u},\bar{v},o_{\bar{t}},o_{\bar{u}},o_{\bar{v}})$ at time $t_k$  is a successor configuration to 
$\pi_{T}(t,t',u,u',v,v',t_0,\bar{t},\bar{u},\bar{v},o_{\bar{t}},o_{\bar{u}},o_{\bar{v}})$ at time $t_0$,
and follows from it through the execution of $k$ steps of the program
of the Turing machine $T$. This result is achieved by representing
every state transition of $T$ as a step of logical inference.

As an example, for a program step that involves a state transition
from state $q$ to state $q'$ if there is the letter $a_l$ on the
work tape at the cell where the read/write head resides, writing the
letter $a_{l'}$ on that tape cell, and backing the read/write head
one step back to the left, the corresponding part of $T$'s
program formula $\pi_{T}(t,t',u,u',v,v',\tau,\bar{t},\bar{u},\bar{v},o_{\bar{t}},o_{\bar{u}},o_{\bar{v}})$ reads:
\begin{equation}
\label{Turing_step}
\begin{split}
(Z_q(t) \land A(t,u) \land B_l (t,u) \land S(t,t',\bar{t},o_{\bar{t}}) \land S(u',u,\bar{u},o_{\bar{u}})) \rightarrow \\
(Z_{q'}(t') \land A(t',u') \land B_{l'} (t',u))
\end{split}
\end{equation}
and analogously for all instructions in the program of the Turing machine.
As part of the program formula $\pi_{T}(t,t',u,u',v,v',t_0,\bar{t},\bar{u},\bar{v},o_{\bar{t}},o_{\bar{u}},o_{\bar{v}})$, 
the initial configuration
of the Turing machine $T$ at time $t_0$ is also defined. This means the definition that the work
tape is empty, the Turing machine is in its initial state and that both read/write heads are at
positions $u_0$ and $v_0$, respectively. However, the program formula does $not$ define
the contents of the input tape. Instead, the free variable $y$ of the input formula \eqref{input} that ranges over values
$\{0, \ldots ,2^{b}\}$, as expressed by the binary input string on the input tape, defines the input to $T$.

The absolute values of the tape variables $u$ and $v$ are always
bounded by $t$, as the read/write head can only move at most one
cell to the left or to the right in a single time step. The
computations of the Turing machine $T$ up to any time bound $b$ are therefore defined by the
\textit{bounded computation formula}
\begin{equation}
\begin{split}
\label{Turing}
\Pi^b_{T} \leftrightarrow Ord^{b+1}(o_{\bar{t}},\bar{t}) \land Ord^{2b+1}(o_{\bar{u}},\bar{u}) \land Ord^{2b+1}(o_{\bar{v}},\bar{v}) \land  \exists y \land\\
\forall (0 \leq t,t',\tau \leq b) \forall (-b \leq u,u' \leq b) \forall (-b \leq v,v' \leq b) \\
\pi_{T}(t,t',u,u',v,v',\tau,\bar{t},\bar{u},\bar{v},o_{\bar{t}},o_{\bar{u}},o_{\bar{v}})
\end{split}
\end{equation}
The condition $\exists y$ is an expression of the requirement that the input tape specified by
a permutation of the input cells defined by $\bar{v}$ contains some binary string.

Because of the time bound $b$, the formula \eqref{Turing} is a first order formula. 
In this notation, the halting state formula $\omega$ can be expressed as
\begin{equation}
\omega \leftrightarrow \bigvee_{i=0}^{b} Z_{\omega} (t_i)
\end{equation}
up to any upper time limit $b$, or equivalently as
\begin{equation}
\omega \leftrightarrow Z_{\omega} (t_b)
\end{equation}
because any Turing machine will stay in the halting state indefinitely, once it has been entered, and this
condition is also stated in the program formula of $T$.

Let us abbreviate the tuple $(t,t',u,u',v,v',\tau)$ by $\bar{x}$, so that we
implicitly bundle together our tuple of seven 'scalar'  bound
variables in \eqref{Turing}, the absolute values of which are all bounded by time $t$. By the notation $\bar{x}\leq b$ we shall mean
that
\begin{equation}
\begin{split}
\bar{x}\leq b \leftrightarrow \\
(t=t_0 \lor \ldots \lor t=t_b) \land \\
(t'=t_0 \lor \ldots \lor t'=t_b) \land \\
(u=u_{-b} \lor \ldots \lor u=u_b) \land \\
(u'=u_{-b} \lor \ldots \lor u'=u_b) \land \\
(v=v_{-b} \lor \ldots \lor v=v_b) \land \\
(v'=v_{-b} \lor \ldots \lor v'=v_b) \land \\
(\tau=t_0 \lor \ldots \lor \tau=t_b)
\end{split}
\end{equation}

The bounded computation formula \eqref{Turing} that defines $\Pi^b_{T}$
can be augmented to always simulate a deterministic Turing machine
by adding a uniqueness formula. The uniqueness formula states that
the validity of any one of the formulas of type  \eqref{Turing_step}
that states the transition from state $Z_i (t)$ to state $Z_{i'}
(t')$ between successive time steps $t$ and $t'$ implies the
negation of every other similar state transition formula from $Z_i
(t)$ to $Z_j (t')$, when $j \neq i'$, and
conjuncting this formula for every pair of states $(i,j)$. Similar formulas
for uniqueness of read/write head movements to the left or to
the right, and for imposing uniqueness of letters printed at the working cell of the work tape
need to be added, too.

There are at most a constant number of such uniqueness formulas per
time step, since there are at most a constant number of
different states and letters in the alphabet of the Turing machine
$T$. Let us denote the number of states of $T$ by $|Q|$. Analogous uniqueness formulas are present in 
\eqref{program} already in the non-deterministic case
for all tape cells, up to a number reachable in $b$ time steps,
requiring the uniqueness of the presence of any letter in any one
tape cell at each time step; the uniqueness of the working or reading cell at each time step
and the uniqueness of the successor relation between time steps
and tape cells, as indicated in\eqref{ord} and \eqref{successor}. The number of uniqueness formulas of successor relations
grows quadratically with the time bound $b$ but the rest linearly, since the 
number of letters and machine states are fixed.

Let us now modify the bounded computation formula \eqref{Turing} so that it will
define a bounded set of computations by $T$ that includes all accepting computations on input
strings at most $M$ bits long as a subset. We shall denote by $b(M)$ the lowest bound on the maximum length
of computations needed to accept all the satisfiable formulas in the corresponding finite fragment of $\mathrm{L}$,
i.e. $\mathrm{L}_M$. This is the requirement of worst case complexity that is implicit in decision problems. The bounded computation formula that defines this bounded set 
of computations of our Turing machine is denoted by $\Pi^{b(M)}_T$, and defined as
\begin{equation}
\label{PiM}
\begin{split}
\Pi^{b(M)}_T \leftrightarrow Ord^{b(M)+1}(o_{\bar{t}},\bar{t}) \land Ord^{2b(M)+1}(o_{\bar{u}},\bar{u}) \land Ord^{2b(M)+1}(o_{\bar{v}},\bar{v}) \land \exists y \land\\
\forall (\bar{x} \leq b(M)) \pi_{T}(t,t',u,u',v,v',\tau,\bar{t},\bar{u},\bar{v},o_{\bar{t}},o_{\bar{u}},o_{\bar{v}})
\end{split}
\end{equation}

From a semantic perspective, parameterizing the bound on the set of computations
by input length instead of the number of time steps is not an essential restriction, as is stated in the following theorem.
\begin{theorem}
\label{timebound}
\begin{equation}
\exists(b>0)\Pi^b_{T} \leftrightarrow \exists (M>0) \Pi^{b(M)}_T
\end{equation}
\end{theorem}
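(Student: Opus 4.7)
The plan is to establish the two directions of the biconditional separately by translating between the time-parameterized and the input-parameterized formulations via the Simulation Lemma. The backward implication is essentially immediate, whereas the forward implication requires extracting an input length from a model of a time-bounded computation and then padding the model to the correct size.

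For the direction $\exists (M>0)\Pi^{b(M)}_T \Rightarrow \exists(b>0)\Pi^b_T$, fix any $M>0$ for which $\Pi^{b(M)}_T$ has a model and set $b := b(M)$. Since accepting even a single satisfiable formula of $L_M$ requires at least one step of computation, $b(M)>0$, so $b>0$. Under this choice of $b$, the formulas $\Pi^b_T$ and $\Pi^{b(M)}_T$ (given in \eqref{Turing} and \eqref{PiM}) are syntactically identical, so any model witnessing the right-hand side also witnesses the left-hand side.

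For the direction $\exists(b>0)\Pi^b_T \Rightarrow \exists (M>0)\Pi^{b(M)}_T$, suppose $\Pi^b_T$ is satisfied in some model $\mathcal{M}$. By the Simulation Lemma, $\mathcal{M}$ encodes a legitimate $T$-computation of length at most $b$, together with an input string $y$ supplied by the $\exists y$ conjunct of \eqref{Turing}. Because the input-tape head starts at $v_0$ and moves at most one cell per time step, only cells indexed within $\{v_{-b},\dots,v_b\}$ can be visited during the computation; combined with the earlier remark that $T$ must read all of its input in order to verify the syntactic condition $y=\theta(\phi)$, this forces $|y|\leq b$. Set $M:=b$, so that $y$ lies in the domain of $\mathrm{SAT}_M$. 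By definition, $b(M)$ is the worst-case acceptance time over all satisfiable formulas of $L_M$, so $b(M)\geq b$; a model of $\Pi^{b(M)}_T$ is then obtained by enlarging the tuples $\bar{t},\bar{u},\bar{v}$ and their permutations $o_{\bar{t}},o_{\bar{u}},o_{\bar{v}}$ to the sizes $b(M)+1$ and $2b(M)+1$ dictated by \eqref{PiM}, and by padding the extra time steps with configurations in which $T$ remains in its halting state (which is permissible, since the program formula of $T$ already enforces this stability once $Z_\omega$ has been entered).

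The principal obstacle is the padding step in the forward direction: the ordered tuples and permutation witnesses appearing in \eqref{PiM} are strictly larger than those of \eqref{Turing}, and the extension must respect every uniqueness and successor-relation clause that is compiled into $\pi_T$. The argument relies on two structural facts already established: that any bounded finite set admits a total order, so the $Ord^{b(M)+1}$ and $Ord^{2b(M)+1}$ witnesses can always be chosen consistently with the $b$-sized orderings of $\mathcal{M}$; and that the deterministic halting convention makes the padded configurations at times $t_{b+1},\dots,t_{b(M)}$ uniquely determined, so no clause of $\pi_T$ is violated by the extension.
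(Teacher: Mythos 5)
Your proposal is correct and rests on the same core observation as the paper's proof of the forward direction: because $T$ is required to read and syntax-check its entire input, the input head can only reach cells $v_{-b},\dots,v_b$ in $b$ steps, so any input witnessed by a model of $\Pi^b_T$ has length at most $b$, and an input-length parameter $M$ can therefore be extracted from the time bound. The differences are in the details. The paper sets $M=|y_{max}|$, the length of the longest string accepted by time $t_b$, and then simply asserts that the bound $b(|y_{max}|)$ exists for the finite fragment $\mathrm{L}_{|y_{max}|}$; it never constructs a model of $\Pi^{b(M)}_T$ and never worries about whether $b(M)$ is larger or smaller than $b$. You instead set $M=b$ and carry out the model-theoretic bookkeeping the paper skips: enlarging the tuples $\bar{t},\bar{u},\bar{v}$ and their permutation witnesses and padding with halting configurations. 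This is a genuine improvement in care, since the satisfaction relation for $\Pi^{b(M)}_T$ really does live over a larger domain than that for $\Pi^b_T$. The one step you assert without argument is $b(M)\geq b$; it is needed for the padding to go in the right direction, and it holds only because, with $M=b$, there exist satisfiable formulas whose encodings have length (close to) $M$, forcing the worst-case reading time alone to be at least $M=b$. You should say this explicitly, since with the paper's choice $M=|y_{max}|$ the inequality could fail and a truncation rather than a padding would be required. Your backward direction (take $b:=b(M)$, the two formulas coincide syntactically) is cleaner than the paper's, which re-derives the existence of a sufficient $b$ from the finiteness of the input set rather than just reusing $b(M)$ itself.
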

\begin{proof} The reverse implication holds, because there is a finite number of acceptable inputs
of length at most $M$ bits and we can choose a $b$ that is sufficient for accepting them all.
For the direct implication, let us take the longest input string $y_{max}$ that our Turing machine accepts
by time $t_b$. This string always exists and is of length at most $b$, 
since we assume that our Turing machine always reads its input.
It then follows that $T$ accepts all those strings within bounded time.
Let us choose $M=|y_{max}|$. Since $\mathrm{L}_{|y_{max}|}$ is a finite fragment of
propositional logic, there exists a time bound $b(|y_{max}|)$ by which all satisfiable formulas in this fragment
have been accepted.  Hence the membership of any input string $y$ in $SAT$ is decided by $\Pi^{b(|y_{max}|)}_T$.
\end{proof}

Theorem \ref{timebound} states that a bound on execution time always implies a bound
on input length, and vice versa, for a $\mathrm{SAT}$ solving Turing machine that is required to read all its input.
The domain of the revised bounded computation formula \eqref{PiM} is included in the domain $N$
of the time-bounded program formula \eqref{program} when $b(M) \leq b$.

\section{Bounded Turing structures}

Let us now turn to models of the revised bounded computation formula \eqref{PiM}, i.e. to
$\Pi^ {b(M)}_T$-structures. These are directed acyclic graphs (DAGs) of computations
that satisfy the program formula \eqref{program} as they proceed from time step $t_0$ to
time step $t_{b(M)}$. $\Pi^ {b(M)}_T$-structures are not trees, because our Turing machine
may revert to the same state from different preceding computations. But since the set of time steps
from $t_0$ to $t_{b(M)}$ is totally ordered, $\Pi^ {b(M)}_T$-structures cannot loop back
in time, and they therefore do not contain cycles.

It is important to note that $\Pi^{b(M)}_T$ defines \textit{all} possible
Turing computations, halting or non-halting, of a given Turing
machine $T$ on \textit{all} binary input strings $y_0$
of length at most $M$. In addition, it may possibly define some computations on longer
input strings and also the initial segments of all other computations. 
The formula $\Pi^{b(M)}_T \land \omega$, on the other hand, 
defines all accepting computations that halt by time step $t_{b(M)}$, whether the input string is of length $M$
or longer - but in any case not longer than $b(M)$ bits. Only a
conjunction with an input formula \eqref{input} expressed in $\Pi^{b(M)}_T \land y = y_0$, i.e. specifying a particular
input string $y_0$, will yield a formula that defines just the
computations pertaining to a particular input. A further conjunction
with the halting formula $\omega$ will yield a formula that defines all halting
computations on a particular input: $\Pi^{b(M)}_T \land \omega \land
y=y_0$.

The Directed Acyclic Graph that corresponds to the bounded computation formula
 $\Pi^{b(M)}_T$ by itself serves as the "monster model" for all correct computations by $T$
up to time step $t_{b(M)}$. Conjunction of  $\Pi^{b(M)}_T$ with $\omega$ or with
various subsets of input strings $\bigvee_{i \in I} (y = y_i)$, where $I \subseteq \{0, \ldots ,2^M\}$
define sub-DAGs that are embedded in the monster-DAG defined by  $\Pi^{b(M)}_T$.
Because there is a bound $b(M)$ on the length of any path in the monster-DAG,
all paths in it terminate in leaf nodes.

For both deterministic and non-deterministic Turing machines,
the semantics of the associated first order bounded computation formula $\Pi^{(b(M)}_T \land
y=y_0$ are fully determined as soon as the contents of the
input tape have been determined by the input formula $y=y_0$.
For a non-deterministic Turing machine, even a fixed input string can correspond
to a "proper" DAG with multiple branches. For a deterministic Turing machine,
the sub-DAG that corresponds to a single input string is always a chain.
If all the branches emanating from a node terminate in the
accepting halting state $\omega$ by time step $t_{b(M)}$, let us call such a
sub-DAG a \textit{halting sub-DAG}.

\section{Atoms}

From now on, we shall assume that our Turing machine $T$ is
deterministic. Let us define an $Atom$ as a time dependent program
state $Z_q(t)$ that satisfies the following first order formula for 
binary string variables $y'$, $y''$ and $y'''$ representing the content of the input tape:
\begin{equation}
\label{atom}
\begin{split}
Atom^M(t,q) \leftrightarrow \exists y' \exists y'' \exists y''' \exists t' \exists u \\
(t_0 \leq t, t' \leq t_{b(M)} \land u_{-b(M)} \leq u \leq u_{b(M)} \land 1 \leq q \leq |Q| \land \\
((\Pi_T^{b(M)} \land (Z_q(t)) \rightarrow Z_{\omega}(t_{b(M)})) \land \\
(\Pi_T^{b(M)} \land y=y' \land |y'| \leq M \land Z_q(t)) \land \\
(\Pi_T^{b(M)} \land y=y'' \land |y''| \leq M \land \\ 
S(t',t,\bar{t},o_{\bar{t}}) \land \bigvee_{q'} \bigvee_l ((Z_{q'}(t') \land A(t',u) \land B_l(t',u)
\land y=y'') \land \\
((\Pi_T^{b(M)} \land Z_{q'}(t') \land A(t',u) \land B_l(t',u)
\land y=y'') \rightarrow Z_q(t)) \land \\
(\Pi_T^{b(M)} \land y=y''' \land |y'''| \leq M \land Z_{q'}(t') \land \\
(\Pi_T^{b(M)} \land y=y''' \rightarrow \neg Z_{\omega}(t_{b(M)})))))))
\end{split}
\end{equation}
$Atom$ is shorthand for \textit{Accepting Transition Of Machine-state}. 
Intuitively, the formula \eqref{atom} states that, within $\mathrm{L}_M$ and with any SAT
solving deterministic Turing machine $T$ with the bounded computation formula
$\Pi_{T}^{b(M)}$, 
\begin{itemize}
\item All computations on any input that assume state $q$ at
time $t$ end up in the accepting halting state by time step
$t_{b(M)}$; 
\item That there is at least one input $y'$ of length at most $M$ whose computation
attains state $q$ at time $t$; 
\item That for at least one state $q'$ during the previous time step
$t'$, from which a transition to state $q$ at time step $t$ is
carried out on some input $y''$ of length at most $M$ by the Turing machine $T$, there is at least
one other input $y'''$ of length at most $M$ that also assumes state $q'$ at time $t'$, but 
that leads to the rejection of that input, since by time step $t_{b(M)}$
all acceptable inputs of length $M$ or less will have been accepted. The
ones that have not halted by time step $t_{b(M)}$, despite their input being at most $M$ bits long,
will therefore never halt.
\end{itemize}

The property described above means that by time step $t_{b(M)}$, our Turing machine $T$ has
\textit{decided} $\mathrm{SAT}_M$.
In terms of  $\Pi^{b(M)}_T$-structures,
$Atoms$ correspond to root nodes of halting sub-DAGs in the monster model
defined by  $\Pi^{b(M)}_T$. There may be more than one $Atom$ in a halting sub-DAG. 
Any eventual state transition to $Z_{\omega}(t)$ at time $t$
comprises an $Atom$ by itself, except when it is encountered only on paths within a halting sub-DAG with an 
$Atom$ already preceding it on every path that leads to it.

One is tempted to call the set of $Atom$s the "event horizon" of our DTM, because it represents the set of
pairs of a state and a time step in the "spacetime" $\{q\} \times \{t\}$ of our DTM, beyond which all computation paths will end up in the "singularity" of the accepting halting state $\omega$, from which there is no return.
All paths to the halting state pass through some $Atom$, for any computation by $T$.
Let us call the first $Atom$ on the path of the computation by $T$ on the formula $\phi$ that belongs to $SAT_M$
the \textit{deciding Atom} of $\phi$. 
The set of $Atoms$ depends
on the bound on input length $M$, and the relation symbol $Atom^M(t,q)$ is
therefore equipped with the parameter $M$.

Since computations by the Turing machine $T$ are uniform, $\Pi_T^{b(M')}$-structures
for $M' > M$ will grow all the branches of their DAGs from the leaf nodes of
the current $\Pi_T^{b(M)}$-structure, our "monster model". Hence all halting sub-DAGs
in it will remain halting sub-DAGs in subsequent $\Pi_T^{b(M')}$-structures for all $M' > M$,
as all their leaf nodes are already in the halting state $\omega$ at time step $t_{b(M)}$,
and can never leave that state in any subsequent computations. Consequently, all $M$-$Atoms$
will stay on as $M'$-$Atoms$ for all $M' > M$. The set of $Atoms$ is therefore a monotonously increasing
set as a function of $M$. 

For any fixed finite bound on input length $M$, we can now establish a set of equivalences between
the following first order sentences
\begin{theorem}
\label{equiv}
\begin{equation}
\label{Cook_ext}
\begin{split}
\Psi_{\mathrm{SAT}_M} \leftrightarrow \exists \phi \exists y \exists e \forall w \exists z (|y| \leq M \land y=\theta(\phi) \land \\
(P(w,z,\phi) \land E(e,z)) \lor (N(w,z,\phi) \land \lnot E(e,z))) \leftrightarrow \\
\exists y \exists (t_0 \leq t \leq t_{b(M)}) \exists (1 \leq q \leq |Q|) (\Pi_T^{b(M)} \land |y| \leq M \land Z_q(t) \land Atom^{M}(t,q) )) \leftrightarrow \\
\exists y \Pi_T^{b(M)} \land |y| \leq M \land \omega 
\end{split}
\end{equation}
\end{theorem}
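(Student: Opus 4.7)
The first biconditional in (\ref{Cook_ext}) is simply the definition (\ref{SATFO_M}) of $\Psi_{\mathrm{SAT}_M}$ unfolded, so the substance lies in the remaining two biconditionals. Write (I), (II), (III) for the three formulas of (\ref{Cook_ext}) in the order they appear. My plan is to prove them equivalent cyclically, via (I) $\Rightarrow$ (III) $\Rightarrow$ (II) $\Rightarrow$ (I), invoking the standing hypothesis that $T$ is a deterministic Turing machine that decides $\mathrm{SAT}$ within the worst-case time bound $b(M)$, the Simulation Lemma~\ref{sim}, and the definition (\ref{atom}) of $Atom^M$.

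For (I) $\Rightarrow$ (III), from a witness of $\Psi_{\mathrm{SAT}_M}$ I would extract a satisfiable CNF formula $\phi$ with $|\theta(\phi)| \leq M$. By the very choice of $b(M)$ as the worst-case bound on the time needed for $T$ to accept every satisfiable formula in $\mathrm{L}_M$, the computation of $T$ on input $y = \theta(\phi)$ halts in $Z_\omega$ by $t_{b(M)}$. The Simulation Lemma then produces a valuation satisfying $\Pi_T^{b(M)} \wedge (y = \theta(\phi)) \wedge \omega$ with $|y| \leq M$, delivering (III).

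For (III) $\Rightarrow$ (II), let $y$ with $|y| \leq M$ drive a computation that halts by $t_{b(M)}$. The remark immediately preceding the theorem asserts that every path to $Z_\omega$ in the monster DAG passes through at least one $Atom$; I would take $(t,q)$ to be the earliest such state on the path, namely the deciding $Atom$ of $y$. One then verifies the clauses of (\ref{atom}): the acceptance clause holds because the computation proceeds to $Z_\omega$; the input $y$ itself witnesses the second clause; and the predecessor/rejection clauses are supplied by the immediately preceding configuration $(t', q')$ together with any input $y'''$ of length at most $M$ that reaches $q'$ but is rejected. For (II) $\Rightarrow$ (I), if $Atom^M(t,q)$ holds and the computation on $y$ reaches $Z_q(t)$, the first conjunct of (\ref{atom}) forces $Z_\omega(t_{b(M)})$, so $T$ accepts $y$. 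Since $T$ is a correct SAT decider, $\phi = \theta^{-1}(y)$ is satisfiable, and any satisfying assignment can be read off as the witness $e$ required by $\Psi_{\mathrm{SAT}_M}$.

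The principal obstacle I anticipate lies in verifying the predecessor and rejection clauses of (\ref{atom}) in the direction (III) $\Rightarrow$ (II): when the deciding $Atom$ sits near the initial configuration, or in degenerate cases where $T$ accepts every input of length at most $M$, the existence of a rejected witness $y'''$ of length at most $M$ is not automatic. This has to be handled by appealing to the non-triviality of $\mathrm{SAT}_M$ (for sufficiently large $M$ there are unsatisfiable formulas in $\mathrm{L}_M$, so rejected short inputs do exist), or by treating the initial state $Z_{q_0}(t_0)$ as a limiting case and shifting $(t,q)$ forward along the path. A secondary subtlety is that the passage from machine-acceptance to formula-satisfiability in (II) $\Rightarrow$ (I) uses the soundness and completeness of $T$ as a SAT decider; this is a meta-assumption about $T$, not a consequence of the program formula, and without it the whole chain of equivalences would be vacuous.
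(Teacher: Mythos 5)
Your proposal follows essentially the same route as the paper's proof: the equivalence of the first and third formulas rests on the worst-case choice of $b(M)$ for a $\mathrm{SAT}$-deciding $T$, the middle formula is handled by locating the deciding $Atom$ on each accepting computation path, and the passage back from machine acceptance to satisfiability invokes the correctness of $T$ as a decider. The only structural difference is that you argue cyclically (I)$\Rightarrow$(III)$\Rightarrow$(II)$\Rightarrow$(I) where the paper argues the two biconditionals separately, which is immaterial. The obstacle you flag in (III)$\Rightarrow$(II) --- that the rejection-witness clause $y'''$ of \eqref{atom} need not be satisfiable when the deciding state sits at the initial configuration or when every short input is accepted --- is a genuine issue, and you should be aware that the paper's own proof does not resolve it either: it simply asserts that every accepting computation passes through at least one $Atom$ without checking all the conjuncts of \eqref{atom}, relying instead on the informal remark that any transition to $Z_{\omega}$ comprises an $Atom$ by itself. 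Your proposed repairs (non-triviality of $\mathrm{SAT}_M$ for large $M$, or shifting the candidate pair $(t,q)$ forward along the path) are more than the paper offers on this point.
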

\begin{proof} The first formula defines $SAT_M$. By the construction of $\Pi_T^{b(M)}$
in section three, our Turing machine must halt at every input $y=\theta(\phi) \land \phi \in \mathrm{SAT}_M$
by time step $t_{b(M)}$, which is the statement of the third formula.
As to the second formula, the unique computation on the encoding of any satisfiable
propositional formula will pass through at least one $Atom$, and the first one of those
will be the one that decides that satisfiable formula. Therefore the time $t$ in $Atom^M(t,q)$
must satisfy $t \leq b(M)$, for any $\phi$ in $SAT_M$. On the other hand, every $Atom^M(t,q)$ will decide at least one propositional formula $\phi$ in $\mathrm{SAT}_M$ which confirms the first equivalence in \eqref{Cook_ext}.
The second equivalence follows from the definition of $Atom^M(t,q)$ as a node in a halting sub-DAG
that halts by time step $b(M)$, and the fact that we have restricted the last formula to inputs at most $M$ bits long.
\end{proof}

On the other hand, since any propositional formula has a finite encoding length, say $M'$, in our
effective encoding scheme, \eqref{Cook_ext} states that any accepting
computation of any $\mathrm{SAT}$ solving deterministic Turing machine 
implies that its input that encodes a propositional formula in $\mathrm{L}_{M'}$ 
belongs to $\mathrm{SAT}_{M'}$ for some $M'>0$. This implies
that the Turing machine halts by time step $t_{b(M')}$ on this input, which in turn implies
the passing of the corresponding computation through some $Atom^{M'}$
in $\Pi_T^{b(M')}$. When we allow $M$ to grow without limit, Theorem \eqref{Cook_ext} will 
therefore apply to every $\phi$ in SAT.

\section{Almost saturated models of $SAT_M$}

Let us now take a closer look at the models of the $FO$ theory $SAT_M$ that defines propositional 
satisfiability of CNF-formulas with an encoding at most $M$ bits long. The basics of model
theory used here can be captured from \cite{ChangKeisler} and \cite{Marker}. 
Starting from the defining formula \eqref{SATFO_M}
of $SAT_M$ on an individual input $y_0$, we see that each $SAT_M$ is a theory
that is defined by a disjunction of all those formulas \eqref{SATFO_M} on those inputs $y$
encoded by at most $M$ bits that correspond to satisfiable propositional formulas in CNF. 

When we take any complete extension of $SAT_M$, our first order language $L_M$ also contains formulas of the form shown below, with any number of variables $e_i$ up to some maximal finite number $\mathrm{maxsize}$ for each $M$, so that for each $1 \leq m \leq \mathrm{maxsize}$ we can define the truth of the sentence below. 
\begin{equation}
\label{saturation}
\begin{split}
\tilde{\eta}_m \leftrightarrow \exists e_1,\ldots, e_m  \exists \phi \exists y \bigwedge_{1 \leq i \leq m} \forall w \exists z (|y| \leq M \land y=\theta(\phi) \land \\
(P(w,z,\phi) \land E(e_i,z)) \lor (N(w,z,\phi) \land \lnot E(e_i,z)) \land \bigwedge_{1 \leq j < i \leq m} \lnot (e_j = e_i))
\end{split}
\end{equation}
From among the $\mathrm{maxsize}$ formulas of the type \eqref{saturation}, only the ones that have a propositional
formula $\phi$ in $\mathrm{SAT}_M$ with exactly $m$ satisfying truth assignments can be used to define
types in the $FO$ theory $SAT_M$. If this is the case, we call the propositional model size $m$ \textit{definable} in $SAT_M$. Formulas of type \eqref{saturation} grade propositional formulas $\phi$ featuring in $SAT_M$ into a decreasing chain by the number of satisfying truth assignments each of the corresponding $FO$ formulas of the form \eqref{saturation} admits, but they do not discern between individual propositional truth assignments. 

Let us now establish a lower bound on the size of any {\it almost saturated}
model of $SAT_M$, derived from the equivalences between $FO$ formulas in \eqref{Cook_ext}
for any fixed $M$. By an almost saturated model we mean a model that realizes an isolated type for
every different propositional model size definable in $\Psi_{\mathrm{SAT}_M}$.
\begin{theorem} 
\label{manymodels}
$SAT_M$ defines at least
$2^{\frac{1}{4k} M / \log M}$ different propositional model sizes for some constant $k>0$.
\end{theorem}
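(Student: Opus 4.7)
The plan is to exhibit, for each integer $q$ with $1 \leq q \leq 2^n - 1$, a CNF formula $\phi_q$ whose number of satisfying truth assignments is exactly $q$ and whose encoding length $|\theta(\phi_q)|$ is $O(n \log n)$. Choosing $n$ so that all these formulas fit within the encoding budget $M$ will then yield at least $2^n - 1$ distinct propositional model sizes definable in $SAT_M$, which for a suitable $n$ will be bounded below by $2^{M/(4k \log M)}$.

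For the construction of $\phi_q$, I would write $q$ in binary as $b_1 b_2 \cdots b_n$ (most significant first), introduce $n$ primary variables $x_1, \ldots, x_n$ together with $n+1$ auxiliary variables $l_1, \ldots, l_{n+1}$, and force the intended meaning that $l_i$ holds iff the suffix $(x_i x_{i+1} \cdots x_n)_2$ is strictly less than $(b_i b_{i+1} \cdots b_n)_2$. This is achieved by the recursion $l_i \leftrightarrow \lnot x_i \land l_{i+1}$ when $b_i = 0$ and $l_i \leftrightarrow \lnot x_i \lor l_{i+1}$ when $b_i = 1$, together with the boundary unit clauses $l_1$ and $\lnot l_{n+1}$. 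Each biconditional expands into at most three CNF clauses of at most three literals, so $\phi_q$ consists of $O(n)$ clauses and $O(n)$ literal occurrences over $2n+1$ variables. Because the auxiliary variables $l_i$ are functionally determined by $x_1, \ldots, x_n$, the number of satisfying assignments of $\phi_q$ over its $2n+1$ variables equals the number of tuples $(x_1, \ldots, x_n)$ with $(x_1 \cdots x_n)_2 < q$, which is exactly $q$.

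For the encoding budget, each literal occurrence costs at most $4(\lceil \log_2(2n+1) \rceil + c_1)$ bits in the scheme $\theta(\cdot)$ of Section 2, so $|\theta(\phi_q)| \leq c_0 \, n \log_2 n$ for some absolute constant $c_0$. Setting $n = \lfloor M/(c_0 \log_2 M) \rfloor$ then guarantees $|\theta(\phi_q)| \leq M$ uniformly for $q \in \{1, \ldots, 2^n - 1\}$, so all these formulas belong to $L_M$ and witness $2^n - 1$ distinct definable model sizes. Taking $k = c_0 / 4$ yields $2^n - 1 \geq 2^{M/(4k \log_2 M)}$ for all sufficiently large $M$, which is exactly the bound claimed.

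The only non-routine point is verifying that the auxiliary variables $l_1, \ldots, l_{n+1}$ do not distort the satisfying-assignment count; this reduces to the standard observation that a definitional CNF extension via biconditionals that functionally determine the newly introduced variables establishes a bijection between satisfying assignments over the primary vocabulary and satisfying assignments over the full vocabulary. The remaining tasks---unrolling each biconditional into at most three clauses, counting literal occurrences, and solving $c_0 n \log_2 n \leq M$ for $n$---are elementary bookkeeping and introduce no further obstructions.
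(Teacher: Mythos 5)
Your proof is correct and follows the same essential strategy as the paper: for each threshold $q < 2^n$, exhibit a formula in $\mathrm{L}_M$ whose satisfying assignments are exactly the binary strings below $q$, so that the $2^n-1$ values of $q$ witness $2^n-1$ distinct propositional model sizes, and then balance $n$ against the encoding budget $M$ to get $n \approx M/\log M$. The paper realizes this with a single comparison formula \eqref{model_sizes} in general (non-CNF) form, fixing the $y$-bits to constants and absorbing the conversion to CNF into an unexplained constant factor $k$; since that formula is a deeply nested alternation of conjunctions and disjunctions, a naive distributive CNF conversion would not stay linear in $n$, so the paper's "factor $k$ for encoding in CNF" is the least rigorous step of its argument. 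Your version replaces this with an explicit Tseitin-style definitional extension: the auxiliary variables $l_i$ are functionally determined by the $x_i$, the biconditionals unroll into $O(n)$ clauses of bounded width, and the bijection between assignments over the primary and extended vocabularies preserves the model count exactly. This buys a genuinely linear-size CNF and a clean justification that the count is exactly $q$, at the modest cost of carrying $n+1$ extra variables. The only point worth making explicit is the paper's convention that model sizes are compared after padding all formulas to a common variable count; since your $\phi_q$ for fixed $n$ all use the same $2n+1$ variables, distinctness of the counts $q$ survives the padding, so the conclusion stands.
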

\begin{proof} Since $\mathrm{L}_M$ formulas
may contain different numbers of propositional variables, we need to define their propositional model
size after each formula has been complemented with dummy variables, up to the
maximum number of variables present in any $\mathrm{L}_M$-formula.

To get a lower bound on the number of propositional model sizes, let us look at increasing chains of sets of
propositional model sizes definable in $\mathrm{L}_M$. For that purpose,  let us take the following propositional formula
that defines an order relation $y > x$ between two binary strings 
$y = (y_{n-1}, y_{n-2}, \ldots , y_0)$ and $x = (x_{n-1}, x_{n-2}, \ldots , x_0)$:
\begin{equation}
\label{model_sizes}
\begin{split}
((y_{n-1}) \land (\lnot (x_{n-1}))) \lor & \\
(((y_{n-1}) \land (x_{n-1})) \lor ((\lnot (y_{n-1})) \land (\lnot
(x_{n-1}))))
\land  (((y_{n-2}) \land (\lnot (x_{n-2}))) \lor & \\
(((y_{n-2}) \land (x_{n-2})) \lor ((\lnot (y_{n-2})) \land (\lnot
(x_{n-2})))) \land  (((y_{n-3})
\land (\lnot (x_{n-3}))) \lor & \\
\cdots & \\
(((y_{1}) \land (x_{1})) \lor ((\lnot (y_{1})) \land (\lnot
(x_{1})))) \land ((y_{0}) \land (\lnot (x_{0}))) \ldots ) &
\end{split}
\end{equation}
Let us choose $n$ so that \eqref{model_sizes} is in $\mathrm{L}_M$. When the
binary string $y$ is fixed by a complete truth assignment to the
propositional variables ${y_{n-1}, y_{n-2}, \ldots , y_0}$, the
formula \eqref{model_sizes} defines all the truth assignments that
represent binary strings preceding $y$ in the numerical ordering
of binary numbers $x<y$. There are as many different such truth
assignments as is the cardinal denoted by the binary number $y$.

Since $y$ can take $2^n$ different values, the theory defined by the
formula \eqref{model_sizes} with all different binary strings for $y$,
interpreted as binary numbers, has
$2^n-1$ non-empty propositional models of different finite cardinality. By fixing
$y$ to all the different binary strings less than or equal to $2^n$ in turn,
while keeping the bits of $x$ as free propositional variables,
we get a family of $\mathrm{L}_M$ formulas that define $2^n-1$ different propositional model
sizes larger than zero. These can be picked up in any order to form an increasing
sequence of propositional model sizes of length $2^n-1$. 

It remains to compute the relation between $n$ and $M$. The formula
\eqref{model_sizes} has a length linear in $n$, since every free
propositional variable $x_i$ appears in it exactly three times and
there are nine Boolean operations and 14 pairs of parentheses per
variable. The encoding of the variables takes $\log n$ bits. If we
choose $M$ to be at least $\frac{1}{4} n \log n$, which is asymptotically
enough to cover the length of the encodings of the Boolean operations and the multiplicity
three of the variables in \eqref{model_sizes}, and a factor $k$ for encoding
\eqref{model_sizes} in CNF, we get the claim of the theorem.
\end{proof}

Let us now show that any model of a complete extension of $SAT_M$ must realize all propositional model sizes definable in \eqref{SATFO_M}. 
\begin{theorem}
\label{sizes}
Any model of a complete extension of $SAT_M$ must be almost saturated.
\end{theorem}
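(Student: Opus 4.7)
The plan is to establish that for every propositional model size $m$ that is definable in $SAT_M$, the sentence $\tilde{\eta}_m$ from \eqref{saturation} is a semantic consequence of $SAT_M$ itself. Since any complete extension of $SAT_M$ inherits every consequence of $SAT_M$, every model of such a complete extension must satisfy $\tilde{\eta}_m$ for each definable $m$, and the existential witnesses supplied by satisfaction of $\tilde{\eta}_m$ will realize the isolated type corresponding to propositional model size $m$.

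First, I would invoke the definition of a definable propositional model size: $m$ is definable in $SAT_M$ exactly when there exists a satisfiable $\phi \in \mathrm{L}_M$ with exactly $m$ distinct satisfying truth assignments. Since $SAT_M$ is, by construction, the theory generated by the disjunction of all formulas \eqref{SATFO_M} ranging over the satisfiable encodings $y = \theta(\phi) \in \mathrm{L}_M$, the existence of at least one satisfiable $\phi$ with exactly $m$ satisfying assignments is a direct semantic consequence of $SAT_M$ for every definable $m$.

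Second, I would show $SAT_M \models \tilde{\eta}_m$ by exhibiting explicit witnesses for the existentials of \eqref{saturation}: take $\phi$ and $y = \theta(\phi)$ as above, and take $e_1, \ldots, e_m$ to be the $m$ distinct satisfying truth assignments of $\phi$. The inner universal-existential clause-satisfaction conjuncts reduce to \eqref{SATFO_M} for each $e_i$ in turn and hold by the very definition of satisfaction, while the distinctness conjuncts $\bigwedge_{1 \leq j < i \leq m} \lnot(e_j = e_i)$ hold because the assignments are pairwise distinct. Therefore every complete extension of $SAT_M$ also entails $\tilde{\eta}_m$. In any model of such a complete extension, the existential $\tilde{\eta}_m$ supplies an actual witnessing tuple $(\phi^{*}, y^{*}, e_1^{*}, \ldots, e_m^{*})$ whose type is pinned down by a conjunction of the form $\tilde{\eta}_m \land \lnot \tilde{\eta}_{m'}$, where $m'$ is the least definable size strictly larger than $m$ (or the second conjunct is omitted if $m$ is maximal among definable sizes). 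This conjunction is precisely the isolating formula for the type corresponding to propositional model size $m$, so the witnessing tuple realizes that isolated type and the model is almost saturated.

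The main obstacle will be the precise bookkeeping of what counts as an isolated type in this setting: $\tilde{\eta}_m$ is a closed sentence rather than a formula with a distinguished free variable, so one must identify the realized type as the type of the witnessing tuple $(\phi^{*}, y^{*}, e_1^{*}, \ldots, e_m^{*})$ and confirm that the conjunction $\tilde{\eta}_m \land \lnot \tilde{\eta}_{m'}$ indeed isolates it inside every complete extension. Checking that distinct definable sizes give provably distinct types reduces to the distinctness clauses already present in \eqref{saturation}, together with the ability of $SAT_M$ to discriminate $\tilde{\eta}_m$ from $\tilde{\eta}_{m'}$ for $m < m'$; this is routine but must be handled case by case when $m'$ is or is not the immediate successor of $m$ among definable sizes.
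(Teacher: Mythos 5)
Your proposal is correct and takes essentially the same approach as the paper: the paper's proof is just the contrapositive of yours, deriving a contradiction from the equivalence $\tilde{\eta}_m \leftrightarrow \tilde{\eta}_{m+l}$ where you instead directly exhibit the $m$ distinct satisfying assignments of a formula with exactly $m$ models as witnesses establishing $SAT_M \models \tilde{\eta}_m$ for every definable $m$. The bookkeeping worry you flag --- that $\tilde{\eta}_m$ and the isolating conjunction $\tilde{\eta}_m \land \lnot \tilde{\eta}_{m+1}$ are closed sentences, so that $\lnot \tilde{\eta}_{m+1}$ is in fact refuted by $SAT_M$ whenever $m+1$ is also a definable size, making the realization-of-isolated-types step require a reformulation with free variables --- is a genuine issue, but it is one the paper's own proof leaves equally unaddressed.
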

\begin{proof}
If one of the propositional model sizes were omitted by a model of a complete extension of \eqref{SATFO_M}, i.e. that there is no element in the model that separates between some pair of  definable propositional model sizes, then
for those definable propositional model sizes $m$ and $m+l$ and some $\phi \in SAT_M$ with $m$ satisfying truth assignments, the formula
\begin{equation}
\begin{split}
\tilde{\eta}_m \leftrightarrow \exists e_1,\ldots, e_m \exists \phi \exists y \bigwedge_{1 \leq i \leq m} \forall w \exists z 
(|y| \leq M \land y=\theta(\phi) \land \\
(P(w,z,\phi) \land E(e_i,z)) \lor (N(w,z,\phi) \land \lnot E(e_i,z)) \land \bigwedge_{1 \leq j < i \leq m} 
\lnot (e_j = e_i)) \leftrightarrow \\
\tilde{\eta}_{m+l} \leftrightarrow \exists e_1, \ldots, e_{m+l}  \exists \phi \exists y  \bigwedge_{1 \leq i \leq m+l} \forall w \exists z (|y| \leq M \land y=\theta(\phi) \land \\
(P(w,z,\phi) \land E(e_i,z)) \lor (N(w,z,\phi) \land \lnot E(e_i,z)) \land \bigwedge_{1 \leq j < i \leq m+l} \lnot (e_j = e_i))
\end{split}
\end{equation} 
would be true, even if $\phi$ does not have even $m+1$ satisfying truth assignments and this leads to a contradiction.
\end{proof}

A set of isolated types of a complete extension of $SAT_M$ can be defined by the formulas
\begin{equation}
\label{types}
\eta_m \leftrightarrow\tilde{\eta}_m \land \lnot \tilde{\eta}_{m+1} 
\end{equation}
for all model sizes $ m \in \{ 1, \ldots, \mathrm{maxsize} \} $ definable in $SAT_M$.
Let us now derive a lower bound on the size of any model of the $FO$ theory $SAT_M$ defined by \eqref{SATFO_M}.
\begin{theorem}
\label{modelsize}
Any model of \eqref{SATFO_M} has at least size $2^{\frac{1}{4k} M / \log M}$ for some constant $k>0$.
\end{theorem}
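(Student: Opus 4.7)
The plan is to combine the two previous theorems: Theorem \ref{manymodels} provides a large collection of definable propositional model sizes, while Theorem \ref{sizes} forces each of these to be realized by a separate witness element inside any model. The pigeonhole on the witnesses then delivers the lower bound.

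First I would let $\mathcal{M}$ be an arbitrary model of $\Psi_{\mathrm{SAT}_M}$ and pass to its complete theory $\mathrm{Th}(\mathcal{M})$, which is by construction a complete extension of $SAT_M$. Theorem \ref{sizes} then applies to $\mathcal{M}$: for every propositional model size $m$ definable in $\Psi_{\mathrm{SAT}_M}$, the type $\eta_m$ of \eqref{types} must be realized in $\mathcal{M}$. I would emphasize here that realizing $\eta_m$ is not merely having a type present: unpacking \eqref{saturation}, it requires the existence inside the universe of $\mathcal{M}$ of $m$ pairwise distinct elements $e_1,\ldots,e_m$ serving as witnesses to $E(e_i,z)$ for some common satisfiable $\phi\in\mathrm{SAT}_M$.

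Next I would invoke Theorem \ref{manymodels}, which guarantees at least $N := 2^{\frac{1}{4k} M / \log M}$ distinct definable propositional model sizes. Since these sizes are distinct positive integers bounded by $\mathrm{maxsize}$, the largest of them, call it $m^{\ast}$, satisfies $m^{\ast} \geq N$. Because $\eta_{m^{\ast}}$ must be realized in $\mathcal{M}$, the universe of $\mathcal{M}$ must contain at least $m^{\ast}$ pairwise distinct elements, and hence $|\mathcal{M}| \geq N = 2^{\frac{1}{4k} M / \log M}$.

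The main subtlety, and probably the only step that requires care, is justifying the move from the statement of Theorem \ref{sizes} (phrased for complete extensions and "almost saturated" models) to the statement we need (any model of \eqref{SATFO_M} whatsoever). The remedy is to note that every model is a model of its own complete theory and to observe that the types $\eta_m$ appearing here are already isolated by \eqref{types}, so they are realized in every model of any complete extension of $SAT_M$, not only in the distinguished saturated ones. Once this is spelled out, the remainder of the argument is a direct pigeonhole count combining the two cited theorems, with no further estimation needed.
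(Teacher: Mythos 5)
Your proposal reaches the right conclusion from the same two ingredients (Theorems \ref{manymodels} and \ref{sizes}), but the final counting step is genuinely different from the paper's. The paper converts the $2^{\frac{1}{4k} M / \log M}$ definable propositional model sizes into that many \emph{distinct isolated types} $\eta_m$ of \eqref{types}, and spends most of its proof arguing---via the valuation property of the encoding and the representation \eqref{representation}---that none of these types is indiscernible in the set of instances of $\mathrm{SAT}_M$, hence none can be omitted by any model of \eqref{SATFO_M}; the lower bound is then read off as one realization per type. You instead realize only the single type $\eta_{m^{\ast}}$ attached to the \emph{largest} definable size $m^{\ast}$, observe that the pairwise inequalities $\lnot(e_j = e_i)$ in \eqref{saturation} force $m^{\ast}$ pairwise distinct elements into the universe, and note that the largest of $N$ distinct positive integers is at least $N$. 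Your route is shorter and more concrete: the needed elements are explicit existential witnesses rather than informal ``one element per type'' realizations, and the passage from an arbitrary model of \eqref{SATFO_M} to a model of a complete extension is handled by the standard observation that $\mathcal{M} \models \mathrm{Th}(\mathcal{M})$, which sidesteps the paper's discernibility discussion entirely. What the paper's longer route buys is the auxiliary claim that the types $\eta_m$ are faithfully represented inside $\mathrm{SAT}_M$ via \eqref{representation}, a fact it leans on again when transporting the bound along the equivalences of \eqref{Cook_ext} in the next section; your argument does not establish that claim, but it is not needed for the statement of Theorem \ref{modelsize} itself.
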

\begin{proof} 
A model of \eqref{SATFO_M} may omit an isolated type of its complete extension if this type is indiscernible in the set of
instances of bounded propositional satisfiability problem $\mathrm{SAT}_M$. We shall show that all definable
propositional model sizes are discernible in the set of instances of $\mathrm{SAT}_M$.

Let us recall that our encoding scheme does not increase encoding length when we replace all appearances of propositional variables $z$ in the encoding $\theta(\phi)$ of the propositional
formula $\phi$ by the constants 0 or 1, according to the satisfying truth assignment defined by $e$.
Let us call this property of an encoding scheme for propositional satisfiability the {\it valuation
property}. Let us denote the propositional formula that results from the replacement of all propositional literals appearing in $\phi$ by the corresponding constants 0 and 1, according to the truth assignment $e$, by $\phi(e)$.
Because of the valuation property, we have
\begin{equation}
\forall \phi \forall e (\phi \in \mathrm{L}_M \rightarrow \phi(e) \in \mathrm{L}_M)
\end{equation}
and all formulas $\phi(e)$ are therefore also decided by $T$ by time step $b(M)$ whenever $\phi \in \mathrm{L}_M$.
All formulas \eqref{saturation} are faithfully represented in $\mathrm{SAT}_M$ by the equivalence
\begin{equation}
\label{representation}
\begin{split}
\tilde{\eta}_m \leftrightarrow \exists e_1,\ldots, e_m  \exists \phi  \bigwedge_{1 \leq i \leq m}\exists y \forall w \exists z (|y| \leq M \land y=\theta(\phi(e_i)) \land \\
(P(w,z,\phi(e_i)) \land E(e_i,z)) \lor (N(w,z,\phi(e_i)) \land \lnot E(e_i,z)) \land \bigwedge_{1 \leq j < i \leq m} \lnot (e_j = e_i))
\end{split}
\end{equation}
Hence so are all formulas \eqref{types} and they define a set of distinct isolated types of $SAT_M$.

By Theorem \eqref{sizes} any model of a complete extension of \eqref{SATFO_M} must realize all definable propositional model sizes defined by \eqref{types}. By the formula \eqref{representation} all these types are discernible in the set of
instances of the bounded propositional satisfiability problem $\mathrm{SAT}_M$. Hence none of them can be
omitted by any model of \eqref{SATFO_M} either. 
We can have as many inequivalent formulas of the form \eqref{types} as there
are different definable propositional model sizes in $SAT_M$. These inequivalent formulas then each isolate a complete type of \eqref{SATFO_M}. By Theorem \eqref{manymodels}, there are at least $2^{\frac{1}{4k} M / \log M}$ different propositional model sizes definable in $SAT_M$ for some constant $k>0$.
\end{proof}

\section{A lower bound on the computational complexity of
propositional satisfiability}

By the set of equivalences in \eqref{Cook_ext}, we see that any model of the third
formula in \eqref{Cook_ext} must contain an isomorphic copy of a model of the second formula that must contain 
an isomorphic copy of a model of the first
formula which, by the requirement of worst case complexity, must be almost saturated,
i.e. contain an isolated type for each different propositional model size definable in $SAT_M$.

These model isomorphisms in \eqref{Cook_ext} are defined by the encoding $\theta(\phi)$ of each propositional formula $\phi$ and by the association of $\phi$ to its unique
deciding $Atom^M$ in the computational path that corresponds to $\theta(\phi)$, respectively. As we have restricted our set of equivalences in \eqref{Cook_ext} onto inputs $y$ at most $M$ bits long, we get the following theorem:
\begin{theorem}
\label{model_embeddings}
Any model of the formula $\exists y \Pi^{b(M)}_T \land |y| \leq M \land \omega$ that defines time-limited halting computations on all inputs at most $M$ bits long by time step $b(M)$ of a propositional satisfiability solving deterministic Turing machine $T$ must contain an isomorphic copy of a model of the formula $\Psi_{\mathrm{SAT}_M}$ that defines the first order theory $SAT_M$ that has at least as many elements as the number of different propositional model sizes definable in $SAT_M$, up to a a constant multiplier.  
\end{theorem}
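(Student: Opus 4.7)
The plan is to treat the chain of equivalences of Theorem \ref{equiv} not merely as a string of logical biconditionals but as specifying explicit model embeddings, and then transport the cardinality lower bound of Theorem \ref{modelsize} along those embeddings. The two linking data already present in the proof of Theorem \ref{equiv} are the encoding $\phi \mapsto \theta(\phi)$, which converts a satisfiable propositional formula witnessing $\Psi_{\mathrm{SAT}_M}$ into a binary input to $T$ of length at most $M$ that is accepted by time step $t_{b(M)}$, and the assignment of $\theta(\phi)$ to the deciding $Atom^M(t,q)$ of the unique halting computation on that input. Composing these yields an injection from the underlying set of any $\Psi_{\mathrm{SAT}_M}$-model into the universe of any model of $\exists y \Pi_T^{b(M)} \land |y| \leq M \land \omega$; injectivity of the composition uses the injectivity of the encoding $\theta$ together with determinism of $T$ so that distinct inputs traverse distinct paths in the monster-DAG.

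Next I would invoke Theorem \ref{modelsize} in conjunction with Theorems \ref{sizes} and \ref{manymodels}: the embedded $SAT_M$-model realizes every definable propositional model size as an isolated type, and there are at least $2^{\frac{1}{4k} M / \log M}$ such sizes. Because the embedding above sends pairwise distinct isolated types to pairwise distinct elements of the Turing structure, the cardinality lower bound on $SAT_M$-models propagates to the ambient Turing-computation model, yielding the stated bound.

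The main obstacle is pinning down the ``up to a constant multiplier'' clause and the precise sense in which the copy sitting inside the Turing model is truly isomorphic to a $\Psi_{\mathrm{SAT}_M}$-model. On the vocabulary side, the source and target formulas live over different relational signatures, so the isomorphism is really between the $SAT_M$-model and the reduct of the Turing structure obtained by CNF-decoding $\theta(\phi)$ and reading off the satisfying assignment from the accepted halting computation. On the counting side, several propositional formulas can in principle share one deciding Atom, so the map from formulas to Atoms is not literally injective on the nose; here I would appeal to the valuation property used in the proof of Theorem \ref{modelsize}, which supplies witnesses $\phi(e)$ for distinct definable propositional model sizes that remain pairwise inequivalent under the deciding-Atom identification, losing at most a constant factor determined by the fixed alphabet size and the number of machine states $|Q|$ of $T$. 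This constant is exactly the multiplier allowed in the theorem statement, and once it is isolated the lower bound of Theorem \ref{modelsize} transfers without further loss.
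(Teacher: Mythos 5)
Your proposal is correct relative to the paper's own argument and takes essentially the same route: the paper's proof is a one-line appeal to Theorem \ref{equiv} ("all models of any of the formulas must be isomorphic") combined with the lower bound of Theorem \ref{modelsize}, and the two linking maps you make explicit --- the encoding $\theta(\phi)$ and the assignment of each satisfiable $\phi$ to its deciding $Atom^M$ --- are exactly the ones the paper names in the paragraph immediately preceding the theorem as "defining" those isomorphisms. Your version is in fact more careful than the paper's, since you explicitly flag and attempt to discharge the vocabulary mismatch and the non-injectivity of the formula-to-Atom map (absorbing the latter into the constant multiplier via the valuation property and $|Q|$), issues the paper's own proof passes over silently.
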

\begin{proof}
By Theorem \eqref{equiv}, all models of any of the formulas in \eqref{Cook_ext} must be isomorphic. Since the
first formula defines $SAT_M$ and cannot have a model smaller than the number of different propositional model sizes
definable in $SAT_M$, this lower bound on model size carries over onto theories defined by the other two formulas as well. 
\end{proof}

From Corollary \eqref{modelsize} above we conclude that any model of any of the latter two formulas
in \eqref{Cook_ext} must contain an isomorphic copy of an almost saturated model of the first formula and therefore has a lower bound on its size  $2^{\frac{1}{4kd} M  / \log
M}$ as a function of $M$ for some constants $k, d > 0$, where $k$ accounts for the bound in Theorem \eqref{manymodels} and $d$ caters for our suboptimal
encoding of propositional formulas. This lower bound on
model size allows us to deduce the following theorem
on the deterministic time complexity of SAT.
\begin{theorem}
\label{SAT_complexity}
The deterministic time complexity of Propositional
Satisfiability $\mathrm{SAT}$ is not less than $C 2^{c M / \log M}$ for some
constants $C > 0$, $c > 0$ with respect to the length $M$ of the input
of a $\mathrm{SAT}$ solving deterministic Turing machine.
\end{theorem}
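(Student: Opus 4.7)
The plan is to convert the model-size lower bound established in Theorems \ref{manymodels}--\ref{model_embeddings} into a time-complexity lower bound on the deterministic Turing machine $T$. The key observation is that, while the universe $N$ of the bounded computation formula $\Pi^{b(M)}_T$ can be written as an enormous Cartesian product, the minimum number of distinct domain elements that any model of $\Pi^{b(M)}_T$ actually realizes grows only linearly with the time bound $b(M)$. Hence the near-exponential model-size lower bound from Theorem \ref{modelsize} forces $b(M)$ itself to be near-exponentially large in $M/\log M$.

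First I would invoke Theorem \ref{model_embeddings} together with Theorem \ref{modelsize}, inflating the exponent by a constant factor $d > 0$ that absorbs the translation between an arbitrary binary encoding $\theta(\cdot)$ and the CNF $\bar{P}, \bar{N}$ representation of Section~1. This shows that every model of the halting-computation formula $\exists y \, \Pi^{b(M)}_T \land |y| \leq M \land \omega$ embeds an almost saturated model of $\Psi_{\mathrm{SAT}_M}$ and must therefore have cardinality at least $2^{M/(4kd\log M)}$.

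Second, I would bound the minimum cardinality of a model of $\Pi^{b(M)}_T$ from above by a linear function of $b(M)$. The first-order variables $t, t', \tau$ range over the $b(M)+1$ time steps $t_0, \ldots, t_{b(M)}$; the variables $u, u', v, v'$ range over the $2b(M)+1$ cells of the work and input tapes; and although the permutation sets $\{o_{\bar{t}}\}, \{o_{\bar{u}}\}, \{o_{\bar{v}}\}$ are large, any particular model requires only one witness from each. Together with the constantly many state and letter elements and the single element naming the input $y$, the minimum domain size of a model is thus $O(b(M))$. Combining the two bounds yields $c_0 \cdot b(M) \geq 2^{M/(4kd\log M)}$ for some absolute $c_0 > 0$, so that $b(M) \geq C \cdot 2^{cM/\log M}$ with $c = 1/(4kd)$ and $C = 1/c_0$; since $b(M)$ is by construction the worst-case execution time of $T$, this gives the claimed time-complexity lower bound.

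The main obstacle is the rigorous justification of the linear upper bound on the minimum model size. One must verify that no identification of distinct $t_i$'s, $u_i$'s or $v_i$'s is compatible with the ordering and successor axioms \eqref{Order}--\eqref{successor}, combined with the uniqueness formulas that enforce determinism: these jointly prevent any collapsing of time steps or tape positions in a model satisfying $\Pi^{b(M)}_T$, and it is exactly this rigidity that converts the abstract model-cardinality bound into a concrete lower bound on $b(M)$.
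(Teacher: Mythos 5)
Your overall strategy coincides with the paper's: take the model-size lower bound $2^{\frac{1}{4kd}M/\log M}$ from Theorems \ref{modelsize} and \ref{model_embeddings}, bound the same quantity from above by a constant times $b(M)$, and conclude $b(M)\geq C2^{cM/\log M}$. But the two arguments diverge at the crucial counting step, and your version of that step has a genuine gap. The paper routes the count through the second formula of \eqref{Cook_ext} and identifies the model size with the number of distinct $Atom^M(t,q)$'s; since $q$ ranges over the fixed state set of size $|Q|$, there are at most $|Q|\cdot(b(M)+1)$ such pairs, which is what yields the linear-in-$b(M)$ upper bound. You instead bound the minimal domain of a model of $\exists y\,\Pi^{b(M)}_T\land|y|\leq M\land\omega$ by $O(b(M))$, counting the time steps, the tape cells, one permutation witness each, and ``the single element naming the input $y$.'' That last item is where the argument breaks: by Theorem \ref{model_embeddings} and the worst-case requirement, this formula must describe halting computations on \emph{all} inputs of length at most $M$, and the variable $y$ ranges over $\{0,\ldots,2^{b}\}$. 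If your minimal model retains only one witness for $y$, it describes a single computation and cannot embed an almost saturated model of $SAT_M$, whose $2^{\frac{1}{4k}M/\log M}$ isolated types are realized by distinct propositional formulas, i.e.\ distinct inputs; if it retains all the inputs, its domain has up to $2^{M}$ elements, the $O(b(M))$ bound is false, and the large-model conclusion of your first step places no constraint whatever on $b(M)$. The near-exponential cardinality has to be forced into a coordinate whose range is genuinely $O(b(M))$ --- the set $\{t\}\times\{q\}$ of atoms --- which is exactly the move the paper makes and your proposal omits.

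A secondary point: the obstacle you flag at the end (that the order and successor axioms \eqref{Order}--\eqref{successor} are rigid enough to prevent collapsing distinct $t_i$'s or $u_i$'s) concerns the wrong direction. Rigidity gives a \emph{lower} bound on the model, showing it cannot shrink below $b(M)$; what your argument actually needs, and what fails, is the \emph{upper} bound asserting that the minimal model is no larger than $O(b(M))$.
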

\begin{proof}
We pick the second formula in \eqref{Cook_ext}
\begin{equation}
\label{Cook_atom}
\begin{split}
\exists y \exists (t_0 \leq t \leq t_{b(M)}) \exists (1 \leq q \leq |Q|) (\Pi_T^{b(M)} \land |y| \leq M \land Z_q(t) \land Atom^M(t,q))
\end{split}
\end{equation}
The models of the theory defined by \eqref{Cook_atom} must have a model size at least 
$2^{\frac{1}{4kd} M / \log M}$. This model size is defined directly
by the number of its $Atom^M$'s. To satisfy the lower bound in Theorem \eqref{modelsize} it must have as many distinct $Atom^M$'s, all defined
by the subformula
$\exists (t_0 \leq t \leq t_{b(M)}) \exists (1 \leq q \leq |Q|) (Z_q(t) \land Atom^M(t,q))$
in \eqref{Cook_atom}. All terms of the form $Z_q(t) \land Atom^M(t,q)$ in
\eqref{Cook_atom} are positive. There cannot be more different
$Atom^M$'s than there are different propositional variables $Z_q(t)$. The program
of any Turing machine is, by uniformity, to be independent of the length
of its inputs. We therefore have at most a constant number $ |Q|$ of
different machine-states. This implies that in order for
\eqref{Cook_atom} to have a model size of at least $2^{\frac{1}{4kd} M / \log M}$,
as required by Corollary \ref{modelsize}, we need to use at least
$\frac{1}{|Q|} 2^{\frac{1}{4kd} M / \log M}$ time steps to supply all the $Atoms$ we need.
By choosing $C=\frac{1}{|Q|}$ and $c=\frac{1}{4kd}$, and allowing $M$ to grow without limit,
we get the claim of the theorem.
\end{proof}

Because SAT is NP-complete and has a superpolynomial lower bound on its
deterministic time complexity by Theorem \eqref{SAT_complexity}, we get
\begin{corollary} $NP > P$.
\end{corollary}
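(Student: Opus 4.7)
The plan is to chain three ingredients already available: Theorem \ref{SAT_complexity}, which yields a superpolynomial deterministic time lower bound $C \, 2^{cM/\log M}$ for any DTM deciding $\mathrm{SAT}$; Cook's theorem, which asserts the $NP$-completeness of $\mathrm{SAT}$; and the elementary containment $P \subseteq NP$.

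First I would verify that $\mathrm{SAT} \notin P$. For any fixed polynomial $p(M) = M^k$, the ratio $C \, 2^{cM/\log M} / M^k$ tends to infinity as $M \to \infty$, since the exponent $cM/\log M$ eventually dominates $k \log_2 M$. By Theorem \ref{SAT_complexity} every deterministic Turing machine deciding $\mathrm{SAT}$ takes at least $C \, 2^{cM/\log M}$ steps on inputs of length $M$ in the worst case, so no polynomial time bound can hold for such a machine. Hence $\mathrm{SAT}$ lies outside $P$.

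Next I would observe that $\mathrm{SAT} \in NP$: a non-deterministic Turing machine may guess a truth assignment $e$ in time linear in $M$ and then verify in polynomial time that $e$ satisfies the encoded CNF formula $\phi$, using the satisfaction condition already recorded in \eqref{SATSOE}. Combined with the previous step this gives $\mathrm{SAT} \in NP \setminus P$. Since $P \subseteq NP$ holds trivially (every DTM being a special NDTM), the strict inclusion $P \subsetneq NP$ follows, which is what the notation $NP > P$ abbreviates.

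The only conceptual point that deserves care, rather than being a genuine obstacle, is ensuring that the lower bound of Theorem \ref{SAT_complexity} really is independent of the choice of DTM and encoding: the constants $C$ and $c$ absorb the polynomial-time overhead between any two reasonable encodings of CNF formulas, so switching to a different $\mathrm{SAT}$-solving DTM only rescales $C$ and $c$ and does not affect the asymptotic conclusion. With that observation the corollary is immediate.
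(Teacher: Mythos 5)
Your argument is correct relative to Theorem \ref{SAT_complexity} and takes essentially the same route as the paper, which likewise derives the corollary in one step from the $NP$-completeness of $\mathrm{SAT}$ together with the superpolynomial deterministic lower bound; you merely spell out the intermediate observations ($\mathrm{SAT}\in NP$, the bound dominating every polynomial, and $P\subseteq NP$) that the paper leaves implicit.
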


{\bf Acknowledgements}. The author is very grateful to Lauri Hella for many extremely
important critical remarks and fruitful discussions, in the course of an almost
non-denumerable number of revisions to the current manuscript.

\end{document}